\numberwithin{equation}{section}
\theoremstyle{plain}
\newtheorem{exam}{Example}[section]
\newtheorem{theorem}[exam]{Theorem}
\newtheorem{lemma}[exam]{Lemma}
\newtheorem{remark}[exam]{Remark}
\def\bc{\begin{center}}
\def\ec{\end{center}}
\def\be{\begin{equation}}
\def\ee{\end{equation}}
\def\ba{\begin{array}}
\def\ea{\end{array}}
\def\bea{\begin{eqnarray}}
\def\eea{\end{eqnarray}}
\def\beaa{\begin{eqnarray*}}
\def\eeaa{\end{eqnarray*}}
\def\al{\alpha}
\def\la{\lambda}
\def\e{\varepsilon}
\def\oo{\infty}
\def\mcl{{\mathcal L}}
\def\Lp{{\mathcal L}^p}
\def\z{\left}
\def\y{\right}
\def\lb{\label}
\def\x#1{(\ref{#1})}
\begin{document}
	\sloppy
	\captionsetup[figure]{labelfont={bf},name={Fig.},labelsep=period}
	\captionsetup[table]{labelfont={bf},name={Table},labelsep=period}

\title{A novel and application-oriented inverse nodal  problem for Sturm-Liouville operators
\footnote{This paper was jointly supported from NSF-ZJ (No.LZ24A010006) and NNSFC (No.11931016).}	
}
	\author
{
	Yuchao He$^{a}$,\qquad
    Mengda Wu$^{a}$,\qquad
	Yonghui Xia$^{b}$\footnote{Corresponding author. E-mails: yhxia@zjnu.cn, xiadoc@163.com}, \qquad
    Meirong  Zhang$^{c}$\\
	{\small \textit{$^a$ School of Mathematical  Science,  Zhejiang Normal University, 321004, Jinhua, China}}\\
	{\small \textit{$^b$ School of Mathematics, Foshan University, Foshan 528000, China }}\\
	{\small \textit{$^c$ Department of Mathematical Sciences, Tsinghua University, Beijing 100084, China }}\\
	{\small Email:  yuchaohe@zjnu.edu.cn; medawu@zjnu.edu.cn; xiadoc@163.com; zhangmr@tsinghua.edu.cn}
}
	\maketitle
	
	\begin{abstract}
		This paper develops a methodological framework for addressing a novel and application-oriented inverse nodal problem in Sturm-Liouville operators,
 having significant applications in seismic wave analysis and submarine underwater radar (sonar) detection. By utilizing a given finite set of nodal data, we propose an optimization framework to find the potential $\hat q$ that is most closely approximating a predefined target potential $q_0$. 
The inverse nodal optimization problem is reformulated as a solvability problem for a class of nonlinear Schr\"odinger equations, enabling systematic investigation of the inverse nodal problem.
 {As an example, when the constant target potential $q_0$ is considered, we find that the Schr\"odinger equations are completely integrable and conclude that the potential $\hat q$ is `periodic' in a certain sense. Furthermore, the reconstruction of $\hat q$ is reduced to solving a system of three featured parameters, thereby establishing an explicit  quantitative relationship between
 	  $\|\hat q\|_{\Lp}$ and $T_*$. Of importance, we prove the uniqueness of the potential $\hat q$ when $p>3/2$. These new findings represent a substantial advancement in this field of study. Our methodology also bridges theoretical rigor with practical applicability, addressing scenarios where only partial nodal information is available.
 	 }
	\end{abstract}
	

	
	 \section{Introduction}\lb{sec1}
		


Let $\Omega = [0, 1]$ be the unit interval. For a fixed exponent $p \in (1, \infty)$, the $L^p$ Lebesgue space on $\Omega$
is denoted by
    \[
\Lp:=L^p(\Omega,\mathbb R).
    \]
For an integrable potential $q \in \Lp$, we consider the Sturm-Liouville operator
    \begin{equation}\lb{s-l}
\mcl y:=-y''+q(x)y,\quad x\in(0,1)
    \end{equation}
with the Dirichlet boundary conditions
    \begin{equation} \lb{dbc}
	y(0)=y(1)=0.
    \end{equation}
According to the classical spectral theory (e.g. see \cite{book}), problem \x{s-l}-\x{dbc} admits an increasing eigenvalues
    \[
	\lambda_1(q)<\lambda_2(q)\cdots<\lambda_m(q)<\cdots
    \]
and their associated eigenfunctions $E_m(x;q)$, $m=1,2,\cdots$. The interior zeros  of the eigenfunctions $E_m(x;q)$ are called  nodes of problem \x{s-l}-\x{dbc}. Note that $E_1(x;q)$ has no zero in $(0, 1)$ and for $m=2,3,\cdots$, $E_m(x;q)$ has precisely $m-1$ zeros inside $(0,1)$, which are ordered as
    \[
0<T_{1,m}<T_{2,m}<\cdots<T_{m-1,m}<1.
    \]
One can consider $T_{i,m}=T_{i,m}(q)$ as (implicitly defined) nonlinear functionals of $q\in \Lp$.  {For further details, we refer the reader to references \cite{book1,book,book3,book4}.}

{ It is well-known that nodes are critically important and observable physical quantities. For instance, in oscillating systems, nodes correspond to points where the amplitude is zero-positions at which the system does not vibrate.
Therefore, determining the optimal location for nodes $T_{i,m}(q)$ based on given the potential data is a significant topic. Recently,  Guo and Zhang \cite{G-Z2} applied the results of \cite{G-Z} to obtain optimal location of the nodes $T_{1,2}(q)$. Moreover, Chu et al. \cite{jiedian} studied the optimal characterizations of locations for all nodes $T_{i,m}(q)$. Different from the usual studies on estimations of nodes, they have deduced the critical equations, which are used to obtain the optimal locations of nodes.	
The aforementioned studies focus on the direct nodal problem using given potential data. However, in practice, the potential function is usually unknown. Instead, nodal data is typically observable since the system does not vibrate at nodal-positions. In such cases, it becomes necessary to reconstruct the potential function from these known nodal properties.
 The pioneering works on these inverse nodal problems go back to McLaughlin \cite{Mclaughlin}. The main result stated that any {\em dense subset} of the nodal set can uniquely determine the potential $q$ up to a constant. Subsequently, Yang \cite{yang} has established explicit reconstruction formulas for the potential and the boundary data by using {\em a dense subset} of the nodal set. For further results along this line, see, for example, Guo and Wei \cite{guo-wei},  { Wang and Yurko\cite{w-y}, Yang \cite{YANGCF}}. 
}

However, in real-world application, it is impossible to obtain all or a lot of nodes. In contrast to aforementioned works based on the  {\em dense subset} of the nodal set  (McLaughlin \cite{Mclaughlin},  Yang \cite{yang,yang2},  Guo and Wei \cite{guo-wei},  Wang and Yurko\cite{w-y}, Yang \cite{YANGCF}), we propose a novel and application-oriented inverse nodal  problem for  Sturm-Liouville operators based on {\em only finitely known nodal data}. As a starting work, we will study the following problem.  { For a given observed node $T_*$ and a target potential $q_0$, does there exist an optimal potential function $\hat q$ that best approximates the observed potential $q_0$ while satisfying the nodal constraints?
	 Additionally, if the potential $\hat q$ exists, can we recover it by the observed node $T_*$?  To enhance both mathematical rigor and pedagogical clarity, we reformulate the preceding questions as a novel inverse nodal optimization problem, formally defined as follows:}

{\em \noindent {\bf Problem $(INP)$:} Let $T_*\in \mathbb (0,1)$ and $q_0 \in \Lp$ be given. It is required to find a potential  $\hat q \in \Lp$
such that the $(i,m)$-th node $T_{i,m}(\hat{q})$  coincides with the given value $T_*$ and 
    \begin{equation}\label{youhua}
	\|\hat q-q_0\|_{\Lp}=\min\{\|q-q_0\|_{\Lp}:T_{i,m}(q)=T_*,\ q\in \Lp\}.
    \end{equation}
    }    
 {This type of inverse problem is of great practical significance. For example, in seismic wave analysis, $T_*$ denotes the location where the seismic wave amplitude vanishes (zero-crossing point), while $q_0$ represents an initial estimate of the subsurface structure $q$, inferred from auxiliary data such as geological surveys, well-log measurements, regional models, physical constraints, or empirical observations. The formulation in Problem (INP), Equation \eqref{youhua}, integrates $q_0$ as a prior constraint, ensuring that the inversion process yields an optimal solution $\hat{q}$ that not only fits the observed seismic wave data $T_*$ but also best aligns with existing geological knowledge $q_0$.   Another important application of this type of inverse problem is the study of submarine underwater radar (sonar) detection. $T_*$ represents the location where the amplitude of the underwater acoustic wave is zero, and $q_0$ denotes an initial estimate of the true structural parameters $q$ of the underwater object based on prior knowledge (e.g., object shape, density, volume, radius, curvature, acoustic impedance, etc.). In the inverse Problem (INP), Equation \eqref{youhua} incorporates $q_0$ as a prior constraint by introducing object shape and other known information. This guides the inversion process to select, from possible solutions, the optimal solution $\hat q$ that not only fits the observed acoustic wave data $T_*$ but also aligns best with our existing understanding of the detected object $q_0$.
}

This class of inverse nodal problems had never been studied before. However, when nodes $T_{i,m}(q)$ are replaced by eigenvalues $\lambda_m(q)$, similar inverse spectral problems with finitely many data on known eigenvalues had been extensively studied by Valeev and Il'yasov \cite{V-I2019, V-I,V-I3}. Their results contain the critical systems of equations and the uniqueness of the desired potential (for several cases).  For more recent researches on these inverse spectral problems,  {we refer to \cite{H-M,s-s-v,P-S,P-T}.} In this sense, {\bf problem $(INP)$} above can be considered as a counterpart of the problems in these papers. However, as for the existence of the optimal potential $\hat q$ for {\bf problem $(INP)$}, the proof of this paper is in some sense a direct method, which is much simpler and completely different from the previous papers. In fact,  {with the constant target potential $q_0$,} our problems and the approaches we will develop are more or less some inverse to those in the recent paper \cite{jiedian}. Moreover,  for these inverse nodal problems, the critical equations obtained are more concise than in \cite{jiedian, G-Z2}.
 {When the target potential $q_0$ is constant, the critical equations are nonlinear Schr\"odinger equations. We will find three new characteristic features to construct $\hat q$ and to establish a direct and  quantitative relationship between $\|\hat q\|_{\Lp}$ and $T_*$.
Note that for the direct nodal problem, Chu et al. \cite{jiedian} deduced the Schr\"odinger equations and the nonlinear equations characterizing nodal properties, but no rigorous  mathematical proof for uniqueness was provided for the nonlinear equations in \cite{jiedian}. In contrast, for the present inverse problem, we rigorously prove the uniqueness of solutions to these nonlinear equations, thereby establishing the uniqueness of $\hat q$ for the case $p>3/2$. These new results have significantly advanced the study in this direction.

 {
This paper is organized as follows. In next section, firstly,  we prove the  existence of the optimal potential $\hat q$ for {\bf problem $(INP)$} based on the complete continuity of nodes $T_{i,m}(q)$ in $q$ (see \cite{jiedian1} {and \cite{G-Z}}).  
  Secondly, by using the formula for the Fr\'echet derivatives of $T_{i,m}(q)$ in $q$, we  deduce  a critical equation for {\bf problem $(INP)$} {which is a (piecewise) second-order nonlinear Schr\"odinger equation}. In Section 3, we  show that,  if $q_0$ is a constant, {the critical equation is `integrable' and its solution has periodicity from the phase portraits of the critical equation in subsection 3.1. Moreover,   the direct relationship between $\|\hat q\|_{\Lp}$ and $T_*$ is expressed by three new characteristic features in subsection 3.2. This implies that the original infinitely dimensional inverse nodal problem is reduced to solve a system of three nonlinear equations for three parameters. Through three parameters, we present the expression of $\|\hat q\|_{\Lp}$}.
We further prove that the parameters in the three new characteristic nonlinear equations are uniquely determined, which leads to the uniqueness of $\hat q$ when $p>3/2$ in subsection 3.3.
We present a special example ($p=2$)  to further illustrate our results, and demonstrate the relationship between  $\|\hat q\|_{\Lp}$ and $T_*$ through numerical simulations in subsection 3.4.   Finally, a conclusion with an open problem is given.
}

\section{The critical system}
\subsection{Existence of optimal potentials}

Firstly, we introduce the following lemma, which is crucial for the subsequent proofs in this paper.

\begin{lemma} \lb{dd} (see \cite{G-Z})
Given $(i,m)$, $T_{i,m}(q)$ is continuously Fr$\acute{e}$chet differentiable in $q \in(L^1, \|\cdot\|_{L^1})$ and
	\begin{equation}\lb{pT}
	\partial_q T_{i,m}(q)(x)=H_{i,m}(x;q)E_m^2(x;q)\in C^1[0,1],
	\end{equation}
where
	\begin{equation}\lb{H}
		H_{i,m}(x;q)=\begin{cases}
			+a_{i,m}, \ {\rm for}\ x\in[0,T_{i,m}],\\
			-b_{i,m},\ {\rm for} \ x\in (T_{i,m}, 1],
		\end{cases}
	\end{equation}
with the positive constants $a_{i,m}=a_{i,m}(q)$ and $b_{i,m}=b_{i,m}(q)$ being given by
	\[\begin{split}
	a_{i,m}&=\int_{T_{i,m}(q)}^1\left(\frac{E_m(v;q)}{E_m'(T_{i,m}(q);q)}\right)^2dv,
	\\
	b_{i,m}&=\int_0^{T_{i,m}(q)}\left(\frac{E_m(v;q)}{E_m'(T_{i,m}(q);q)}\right)^2dv.
    \end{split}
	\]
\end{lemma}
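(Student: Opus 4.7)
The plan is to view $T_{i,m}$ as implicitly defined by $E_m(T_{i,m}(q); q) = 0$ and to extract its Fréchet derivative by a Lagrange-type identity. The only piece of spectral perturbation theory I would borrow is the classical eigenvalue derivative $\partial_q \lambda_m(q)[h] = \int_0^1 h(x)\, E_m^2(x;q)\, dx$, valid whenever $E_m$ is $L^2$-normalized; it follows by differentiating $-E_m'' + q E_m = \lambda_m E_m$ in $q$ at direction $h$, pairing with $E_m$, and integrating by parts once using the Dirichlet conditions.

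For the node derivative itself, I would set $q_\epsilon := q + \epsilon h$ with $h \in L^1$, abbreviate $E := E_m(\cdot; q)$, $\tilde E := E_m(\cdot; q_\epsilon)$, $T_0 := T_{i,m}(q)$, $T := T_{i,m}(q_\epsilon)$, and derive the Lagrange identity
\begin{equation*}
\tilde E'(T)\, E(T) = \bigl(\lambda_m(q) - \lambda_m(q_\epsilon)\bigr) \int_0^T E\, \tilde E\, dx + \epsilon \int_0^T h\, E\, \tilde E\, dx
\end{equation*}
by multiplying the two eigenvalue equations by $\tilde E$ and $E$ respectively, subtracting, integrating over $[0,T]$, and using $E(0) = \tilde E(0) = \tilde E(T) = 0$. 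Applying $E(T) = E'(T_0)(T - T_0) + o(T - T_0)$ on the left and the eigenvalue formula on the right, then splitting $\int_0^1 h\, E^2$ across $T_0$ and exploiting $\int_0^{T_0} E^2 + \int_{T_0}^1 E^2 = 1$, I expect the $O(\epsilon)$ terms to collapse exactly to $(T - T_0)/\epsilon \to \int_0^1 H_{i,m}(x;q)\, E^2(x;q)\, h(x)\, dx$, with the constants $a_{i,m}, b_{i,m}$ produced by the factor $E'(T_0)^{-2}$ together with the two partial $L^2$-integrals of $E$ on $[0, T_0]$ and $[T_0, 1]$.

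For continuous differentiability from $(L^1, \|\cdot\|_{L^1})$, I would argue that $q \mapsto H_{i,m}(\cdot;q)\, E_m^2(\cdot;q)$ is continuous into $C^1[0,1]$, which embeds into $L^\infty = (L^1)^*$. This reduces to standard $C^1$-continuity of the eigenpair $(\lambda_m(q), E_m(\cdot;q))$ under $L^1$-perturbations of $q$ and to the fact that $T_0$ is a simple zero, so $E'(T_0;q) \ne 0$ depends continuously on $q$. The main obstacle I anticipate is rigorous control of the remainders as $\epsilon \to 0$: one needs an a priori bound $|T - T_0| = O(\epsilon\, \|h\|_{L^1})$ and uniform convergence $\tilde E \to E$ on $[0,1]$, both of which I would obtain from the implicit function theorem applied at the simple zero $T_0$ of $E$, combined with $C^1$-continuity of $E_m(\cdot;q)$ in $q \in L^1$ provided by classical Sturm-Liouville ODE perturbation theory.
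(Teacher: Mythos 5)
The paper offers no proof of this lemma at all: it is imported verbatim from Guo--Zhang \cite{G-Z}, so there is nothing internal to compare your argument against, and it must be judged on its own terms. On those terms your sketch is essentially the standard derivation and it does work. Your Lagrange identity is correct: integrating $(\tilde E'E-E'\tilde E)'=(\lambda-\tilde\lambda)E\tilde E+\epsilon hE\tilde E$ over $[0,T]$ and using $E(0)=\tilde E(0)=\tilde E(T)=0$ gives $\tilde E'(T)E(T)$ on the left; inserting $E(T)=E'(T_0)(T-T_0)+o(T-T_0)$, $\lambda_m(q_\epsilon)-\lambda_m(q)=\epsilon\int_0^1 hE^2\,dx+o(\epsilon)$ and $\int_0^{T_0}E^2+\int_{T_0}^1E^2=1$ yields
\begin{equation*}
\bigl(E'(T_0)\bigr)^2\,\lim_{\epsilon\to0}\frac{T-T_0}{\epsilon}
=\Bigl(\int_{T_0}^1E^2\,dv\Bigr)\int_0^{T_0}hE^2\,dx-\Bigl(\int_0^{T_0}E^2\,dv\Bigr)\int_{T_0}^1hE^2\,dx,
\end{equation*}
which is exactly \eqref{pT}--\eqref{H} with the stated $a_{i,m},b_{i,m}$.

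Four points deserve explicit mention to close the argument. First, the formula \eqref{pT} is normalization-dependent: $a_{i,m},b_{i,m}$ are scale-invariant while $E_m^2$ is not, so the identity holds only for $L^2$-normalized eigenfunctions (otherwise a factor $\bigl(\int_0^1E_m^2\bigr)^{-1}$ appears); you use this normalization implicitly via the eigenvalue-derivative formula and the relation $\int_0^{T_0}E^2+\int_{T_0}^1E^2=1$, but it should be stated. Second, on the left-hand side you need $\tilde E'(T)\to E'(T_0)$, hence $C^1$-convergence of $E_m(\cdot;q_\epsilon)$ to $E_m(\cdot;q)$ with a coherent sign choice of the eigenfunction branch, not merely uniform convergence; your appeal to $C^1$-dependence of the eigenpair on $q\in L^1$ (and $E_m'(T_0)\ne0$, giving $|T-T_0|=O(\epsilon)$ by the implicit function theorem) covers this, so make that the stated hypothesis rather than uniform convergence. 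Third, your limit is a directional (Gateaux) derivative for fixed $h$; to reach the claimed continuous Fr\'echet differentiability on $(L^1,\|\cdot\|_{L^1})$ you should either make the remainders $o(\epsilon\|h\|_{L^1})$ uniformly on the $L^1$ unit ball, or, in line with your last paragraph, note that the Gateaux derivative $h\mapsto\int_0^1H_{i,m}E_m^2h\,dx$ exists for all $q$ near the given potential and depends continuously on $q$ in the $(L^1)^*=L^\infty$ norm, and then invoke the standard theorem that a continuous Gateaux derivative is a Fr\'echet derivative. Fourth, the assertion $H_{i,m}E_m^2\in C^1[0,1]$, which is part of the statement, follows in one line because $E_m^2$ vanishes to second order at the node, so the jump of $H_{i,m}$ is invisible in the product and in its derivative $2H_{i,m}E_mE_m'$; this is worth adding.
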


{Let us notice from formulas \x{pT} and \x{H} that
    \be \lb{ze}
    \int_0^{T_{i,m}(q)} a_{i,m} (q)E_m^2(v;q) dv = \int_{T_{i,m}(q)}^1  b_{i,m}(q)E_m^2(v;q) dv.
    \ee
 Formula \x{ze} implies that $\int_0^1 H_{i,m}(v;q)E_m^2(v;q)dv =0$, which is  coincident with the translation invariance of nodes
    \[
T_{i,m}(q+c)=T_{i,m}(q)\qquad \forall c\in {\mathbb R}.
    \]
Namely, in the inverse problems using nodes, there may be multiple different potentials which are corresponding to the same observation values $T_{i,m}$. The uniqueness can be obtained only up to a constant.
}

    \begin{theorem}\lb{existence}
Let $q_0 \in \Lp$ be a given potential. Then there exists a potential $\hat{q}\in \Lp$ which solves {\bf problem $(INP)$}.
	\end{theorem}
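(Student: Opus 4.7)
The plan is to apply the direct method of the calculus of variations, exploiting the reflexivity of $\Lp$ for $p\in(1,\infty)$ together with the complete continuity of the nodal functional $T_{i,m}$ provided by Lemma~\ref{dd} and the results quoted from \cite{G-Z,jiedian1}.

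First, I would verify that the admissible set $S:=\{q\in\Lp:T_{i,m}(q)=T_*\}$ is non-empty, so that the infimum $d:=\inf_{q\in S}\|q-q_0\|_{\Lp}$ is finite. A convenient way is to start from $q\equiv 0$, where the eigenfunctions are sines and $T_{i,m}(0)=i/m$, and to use the continuous Fr\'echet dependence in \x{pT} together with a scaling/perturbation of $q$ to sweep $T_{i,m}$ through the whole interval $(0,1)$; alternatively one may invoke the range results of \cite{jiedian}.

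Second, I would pick a minimizing sequence $\{q_n\}\subset S$ with $\|q_n-q_0\|_{\Lp}\to d$. By the triangle inequality, $\{q_n\}$ is norm-bounded in $\Lp$. Since $\Lp$ is reflexive, after extraction we may assume $q_n\rightharpoonup \hat q$ weakly in $\Lp$ for some $\hat q\in\Lp$.

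Third, I would pass to the limit in both the constraint and the objective. Complete continuity of $T_{i,m}$ on $\Lp$, i.e.\ the fact that $q_n\rightharpoonup \hat q$ in $\Lp$ implies $T_{i,m}(q_n)\to T_{i,m}(\hat q)$ in $\mathbb{R}$, is precisely the compactness input supplied by \cite{jiedian1,G-Z}. Combining it with $T_{i,m}(q_n)=T_*$ gives $T_{i,m}(\hat q)=T_*$, so $\hat q\in S$. The $\Lp$-norm is weakly lower semicontinuous, hence
\[
\|\hat q-q_0\|_{\Lp}\le \liminf_{n\to\infty}\|q_n-q_0\|_{\Lp}=d,
\]
which together with $\hat q\in S$ forces equality, proving that $\hat q$ solves Problem $(P)$.

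The principal obstacle lies in the third step: passing to a weak limit inside the nonlinear constraint $T_{i,m}(q)=T_*$. Weak convergence alone is notoriously fragile under nonlinear operations, so the argument rests entirely on the genuinely nontrivial fact that the map $q\mapsto T_{i,m}(q)$ is completely continuous. The underlying reason is that although $T_{i,m}$ depends on $q$ through the highly nonlinear eigenvalue/eigenfunction pair $(\lambda_m(q),E_m(\cdot;q))$, this pair itself depends compactly on $q$ (oscillations in $q$ are smoothed away at the level of eigenfunctions), and Lemma~\ref{dd} shows that the Fr\'echet derivative $\partial_q T_{i,m}$ is given by a continuous kernel. Because this deep property is already established in the cited literature, the proof collapses to the clean direct-method skeleton above.
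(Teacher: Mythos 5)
Your proposal is correct and follows essentially the same route as the paper: a direct-method argument with a minimizing sequence in $S_{T_*}$, weak compactness from the reflexivity of $\Lp$, weak lower semicontinuity of the norm, and the complete continuity of $q\mapsto T_{i,m}(q)$ (from \cite{G-Z,jiedian1}) to pass the nodal constraint to the weak limit. Your extra remark on verifying that $S_{T_*}$ is non-empty (so the infimum is finite) is a sensible refinement that the paper leaves implicit, but it does not change the argument.
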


	\begin{proof}
Let
    $$
    S_{T_*}=\{q\in \Lp:T_{i,m}(q)=T_*\}.
    $$
Since $\partial_q T_{i,m}(q)\ne 0$ (see \x{pT}), one knows that $S_{T_*}$ is a 1-codimension differentiable sub-manifold of $(\Lp, \|\cdot\|_{\Lp})$. For {\bf problem $(INP)$}, there exists a minimizing potential sequence $\{q_n\}$ such that $q_n\in S_{T_*}$ $(n\in \mathbb N)$ and
    \begin{equation}\lb{1.5}
			\|q_n-q_0\|_{\Lp}\to \inf_{q\in S_{T_*}}\|q-q_0\|_{\Lp}<+\infty.
    \end{equation}
This implies that $\|q_n\|_{\Lp}$ $(n\in \mathbb N)$ are bounded. Without loss of generality, let us assume that $q_n\rightharpoonup \hat q$ in $\Lp$. According to the property of weak convergence, from \eqref{1.5}, we can obtain
		\begin{equation}\lb{1.6}
			\|\hat q-q_0\|_{\Lp}\leq\liminf\limits_{n\to\infty}\|q_n-q_0\|_{\Lp}=\inf_{q\in S_{T_*}}\|q-q_0\|_{\Lp}.
		\end{equation}

Since nodes are completely continuous in potentials \cite[Theorem 1.2]{G-Z}, we have
		\[
		T_{i,m}(\hat q)=\lim_{n\to\infty}T_{i,m}(q_n)=T_*,
		\]
i.e. $\hat q\in S_{T_*}.$ From  inequality  \eqref{1.6}, it follows that
		\be \lb{RT}
		\|\hat q-q_0\|_{\Lp}=\inf_{q\in S_{T_*}}\|q-q_0\|_{\Lp}=\min_{q\in S_{T_*}}\|q-q_0\|_{\Lp}=: R_{T_*}.
		\ee
This completes the proof.
	\end{proof}


\subsection{The critical equation for optimal potentials}\lb{sec21}

Let us assume in {\bf problem $(INP)$} that $T_{i,m}(q_0)\ne T_*$. Therefore $\hat q\ne q_0$. Otherwise, the trivial solution to {\bf problem $(INP)$} is simply $\hat q=q_0$.

In this subsection, we apply the Lagrange multiplier method to transform the inverse nodal optimization problems into the solvability of a class of Schr\"odinger equations. Due to this, some systematic investigations of the inverse nodal problems can be studied using the usual approach. {However, we have not addressed the uniqueness of the minimal potentials $\hat q$.}

Due to Theorem \ref{existence}, {\bf problem $(INP)$} is a minimization problem on the aim functional $\|q-q_0\|_{\Lp}$ with a constraint
    \[
    T_{i,m}(q)=T_*.
    \]
For an exponent $p \in (1, \infty)$, the increasing homeomorphism $\phi_{p}(s) : \mathbb R \to \mathbb R$ is defined by
    \begin{equation}\lb{phip}
	\phi_p(s)=|s|^{p-2}s, \quad \rm{for}\  s\in \mathbb R,
    \end{equation}
and $\phi_p^{-1}(t)=|t|^{p^*-2}t$, where $p^*=\frac{p}{p-1}\in (1,\infty)$ is the conjugate exponent of $p$. Note that the Fr\'echet derivatives $\partial_q T_{i,m}(q)$ are given by  \x{pT} and \x{H}. Moreover, one has
    \be \lb{pq}
    \partial_q \|q-q_0\|_{\Lp}(x)=\|q-q_0\|_{\Lp}^{1-p} \phi_p(q(x)-q_0(x)), \quad x\in\Omega,
    \ee
for $q\in \Lp \setminus \{q_0\}$. Applying the Lagrange multiplier method to a minimizing potential $\hat q$ of {\bf problem $(INP)$}, we obtain
    \begin{equation}\lb{qq0}
	\|\hat q-q_0\|_{\Lp}^{1-p} \phi_p(\hat q(x)-q_0(x))=c H_{i,m}(x;\hat q)(E_{i,m}(x;\hat q))^2,\quad x\in{\Omega},
    \end{equation}
where the constant $c=c_{\hat q}\ne0$.

By using \x{phip}, equation \x{qq0} reads as
    \begin{equation}\lb{eq1}
\phi_p(\hat q(x)-q_0(x))=\begin{cases}
	+\tilde a_{i,m}(E_{m}(x;\hat q))^2,	\quad x\in{[0,T_*]},\\
		-\tilde b_{i,m}(E_{m}(x;\hat q))^2,	\quad x\in{(T_*,1]},
\end{cases}
	\end{equation}
where $T_*$ is as in {\bf problem $(INP)$} and the constants $\hat a_{i,m}$ and $\hat b_{i,m}$  are
    \[
    \hat a_{i,m}= \|\hat q-q_0\|_{\Lp}^{p-1} a_{i,m}(\hat q) c , \quad \hat b_{i,m}= \|\hat q-q_0\|_{\Lp}^{p-1} b_{i,m}(\hat q) c.
    \]	
For simplicity, denote the sign
    \[
    \e:={\rm sign}(c) = \pm 1.
    \]
Then
    \[
    {\rm sign}(\hat a_{i,m})={\rm sign}(\hat b_{i,m})= {\rm sign}(c)=\e.
    \]

By \x{eq1}, let us introduce the function
    \begin{equation}\lb{umx}
	u_m(x):=\begin{cases}
		\sqrt{|\hat a_{i,m}|}\, E_m(x;\hat q),\quad x\in[0,T_*],\\
        \sqrt{|\hat b_{i,m}|}\, E_m(x;\hat q),\quad x\in(T_*,1].
	\end{cases}
	\end{equation}
{This is a piecewise scaling `eigenfunction' of $E_m(x;\hat q)$, because $u_m(x)$ is no longer differentiable at $x=T_*$.}
Then we can write the critical equation \x{eq1} as
    \begin{equation}\lb{eq2}
\hat q(x)-q_0(x) =\begin{cases}+\e \phi_{p^*}(u_m^2(x))= +\e |u_m(x)|^{2p^*-2},\quad x\in[0,T_*],\\
	-\e\phi_{p^*}(u_m^2(x))= -\e|u_m(x)|^{2p^*-2},\quad x\in(T_*,1].
\end{cases}
	\end{equation}{An important observation on $u_m(x)$ which is deduced from \x{ze} is the following equality
    \be \lb{ze1}
    \int_0^{T_*} u_m^2(x) dx = \int_{T_*}^1  u_m^2(x) dx.
    \ee
Moreover, due to \x{RT}, we have 
    \be \lb{RT1}
    R_{T_*}=\z\| |u_m|^{2p^*-2}\y\|_{\Lp} = \left(\|u_m\|_{\mathcal L^{2p^*}}\right)^{2/(p-1)}.
    \ee}

{Since $u_m(x)$ can change signs on both of $[0,T_*]$ and $(T_*,1]$, the critical equation has the following different forms on the intervals $[0, T_*]$ and $[T_*,1]$.}
	Let us notice from the first line of \x{umx} and from the equation for eigenfunction $E_m(x;\hat q)$ that
	\[
	-u''_m + \hat q(x) u_m=\la_m u_m,\quad x\in[0,T_*].
	\]
	By using the first case of \x{eq2},  {this equation is transformed into the following form:}
	\[
		-u_m''+q_0(x)u_m+\e \phi_{2p^*}(u_m)=\lambda_m u_m,\quad  x\in[0,T_*],
	\]
	 because
	\[
	|u_m(x)|^{2p^*-2}\cdot u_m(x)\equiv \phi_{2p^*}(u_m(x)).
	\]
	 {For the case where 
	$x\in[T_*,1]$, repeating the above derivation  and incorporating the Dirichlet boundary conditions \eqref{dbc}, we immediately obtain the following critical equation.
	 \begin{equation}\lb{P}
		\begin{cases}
			-u_m''+q_0(x)u_m+\e \phi_{2p^*}(u_m)=\lambda_m u_m,\quad  x\in[0,T_*],\\
			-u_m''+q_0(x)u_m-\e \phi_{2p^*}(u_m)=\lambda_m u_m,\quad  x\in[T_*,1],
		\end{cases}
	\end{equation}
	{where $\la_m:= \la_m(\hat q)$.}
	Moreover, $u_m(x)$ satisfies
	\begin{equation} \lb{bc1}
		u_m(0)=u_m(T_*)=u_m(1)=0,
	\end{equation}
	and $u_m(x)$ has precisely $(i-1)$ zeros in $(0,T_*)$, and $(m-1-i)$ zeros in $(T_*,1)$. At this stage, we have transformed the original optimization problem into a Schr\"odinger equation problem and present the following theorem.
    \begin{theorem}
For {\bf problem $(INP)$}, the piecewise scaling `eigenfunction' $u_m(x)$  satisfies the critical equation
   \eqref{P}.
    \end{theorem}
}

		\begin{remark}
The critical equation derived in this paper differs slightly from that in \cite{jiedian}. This difference arises from our adoption of a new normalization method, which removes the influence of the function $H(x)$ on the critical equation, thereby simplifying it. Interestingly, unlike direct nodal problems, the sign of $\e$ in the critical equation of the inverse nodal problem is not fixed and may vary. A detailed analysis of the sign of $\e$ is provided in the next section.
			\end{remark}

	\section{The reconstruction of minimal potentials}\lb{sec4}

	In this section, we  focus on the case when $q_0$ is a constant and find the optimal potential by solving the critical  equation. A necessary result is as follows.

    \begin{lemma}\lb{lam}
		If $q_0$ is a constant, then the $m$-$th$ eigenvalue  {in \eqref{P}}  satisfies $\lambda_m=\la_m(\hat q)>q_0$.
    \end{lemma}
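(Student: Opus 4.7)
The plan is to test each branch of the piecewise Schr\"odinger system \x{P} against $u_m$ on its own subinterval, exploiting the fact that $u_m$ vanishes at all three breakpoints $0$, $T_*$, $1$ by \x{bc1}. First I would multiply the first equation of \x{P} by $u_m(x)$ and integrate over $[0,T_*]$. One integration by parts kills the boundary term (both endpoints are zeros of $u_m$), producing the identity
\[
\int_0^{T_*}(u_m')^2\,dx + q_0 \int_0^{T_*} u_m^2\,dx + \e \int_0^{T_*}|u_m|^{2p^*}\,dx = \la_m \int_0^{T_*}u_m^2\,dx.
\]
Repeating the same test on $[T_*,1]$ with the second equation of \x{P} gives
\[
\int_{T_*}^1(u_m')^2\,dx + q_0 \int_{T_*}^1 u_m^2\,dx - \e \int_{T_*}^1|u_m|^{2p^*}\,dx = \la_m \int_{T_*}^1 u_m^2\,dx.
\]

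Since $u_m$ is a piecewise rescaling of the eigenfunction $E_m(\cdot;\hat q)$, which has only isolated zeros, both $\int_0^{T_*}u_m^2$ and $\int_{T_*}^1 u_m^2$ are strictly positive (in fact equal, by \x{ze1}), and the same holds for $\int (u_m')^2$ on each subinterval. Next I would distinguish cases on the sign $\e = \pm 1$. If $\e = +1$, the first identity rearranges to
\[
\la_m - q_0 = \frac{\int_0^{T_*}(u_m')^2\,dx + \int_0^{T_*}|u_m|^{2p^*}\,dx}{\int_0^{T_*} u_m^2\,dx} > 0.
\]
If $\e = -1$, the second identity likewise yields
\[
\la_m - q_0 = \frac{\int_{T_*}^1(u_m')^2\,dx + \int_{T_*}^1|u_m|^{2p^*}\,dx}{\int_{T_*}^1 u_m^2\,dx} > 0.
\]
In either case $\la_m > q_0$.

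I do not anticipate a serious obstacle; the only point I would verify carefully is that $u_m \not\equiv 0$ on each open subinterval, which is automatic from the nodal structure stated immediately after \x{P}. The whole mechanism is the standard observation that although the ``extra'' nonlinear term is sign-indefinite when the two halves are combined, it always takes a favourable sign on the single subinterval matched to $\e$; the constancy of $q_0$ is what allows it to be pulled cleanly across the integral as the additive quantity $\la_m-q_0$.
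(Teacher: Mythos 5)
Your argument is correct and is essentially the paper's own proof: the paper likewise tests each branch of \eqref{P} against $u_m$ on its subinterval, uses \eqref{bc1} to kill the boundary terms, and reads off $\lambda_m>q_0$ from the first identity when $\e=+1$ and from the second when $\e=-1$. The only (immaterial) difference is the constant in front of the nonlinear term --- the paper's \eqref{jifen} carries a factor $\tfrac{1}{p^*}$ while your direct computation gives coefficient $1$ (which is in fact the correct constant for multiplying by $u_m$ and integrating by parts) --- and this does not affect the sign argument.
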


		\begin{proof}	
By integrating equations \eqref{P} on $[0,T_*]$ and on $[T_*,1]$ respectively and using conditions in \x{bc1}, we can obtain
	\begin{equation}\lb{jifen}
		\begin{cases}
		\int_0^{T_*}(u_m')^2dx+\int_0^{T_*}q_0(x)u_m^2dx+\frac{\e}{p^*}\int_0^{T_*}|u_m|^{2p^*}dx=\lambda_m\int_0^{T_*} u_m^2dx,\\
		\int_{T_*}^1(u_m')^2dx+\int_{T_*}^1q_0(x)u_m^2dx-\frac{\e}{p^*}\int_{T_*}^1|u_m|^{2p^*}dx=\lambda_m\int_{T_*}^1 u_m^2dx.
		\end{cases}
	\end{equation}
If $\e=+1$, according to the first equation of \eqref{jifen}, we obtain that $\lambda_m>q_0$. If $\e=-1$, the conclusion is obtained through the second equation of \eqref{jifen}.
		\end{proof}

\subsection{The first integrals and phase portraits}

Since $q_0$ is a constant, without loss of generality, one can take $q_0=0$. By Lemma \ref{lam}, one has $\la_m>0$. Since the relation between $\hat q(x)$ and $u_m(x)$ are clearly stated in \x{eq2}, we concentrate the study on $u_m(x)$, which solves equations
    \begin{equation}\lb{P1}
	\begin{cases}
	-u_m''+\e \phi_{2p^*}(u_m)=\lambda_m u_m,\quad  x\in[0,T_*],\\
	-u_m''-\e \phi_{2p^*}(u_m)=\lambda_m u_m,\quad  x\in[T_*,1].
	\end{cases}
    \end{equation}

   {
    According to the qualitative theory of planar ordinary differential systems, it is not difficult to conclude that the system \eqref{P1} exhibits a certain periodicity over interval $[0,T_*]$ and $[T_*,1]$.  For the case of $\e=+1$, the phase portraits of the equations \eqref{P1} are shown in Fig. \ref{fig:img1} and Fig. \ref{fig:img2}. For $\e=-1$, the phase portraits of equations \eqref{P1} over
    $x\in[0,T_*]$ and
    $x\in[T_*,1]$  correspond to the phase portraits of equations \eqref{P1} over
     $x\in[T_*,1]$ and
    $x\in[0,T_*]$ respectively, for the case $\e=+1$.
}

\begin{figure}[htbp]
	\centering
	\begin{minipage}{0.48\textwidth}
		\includegraphics[width=\textwidth, height=6cm]{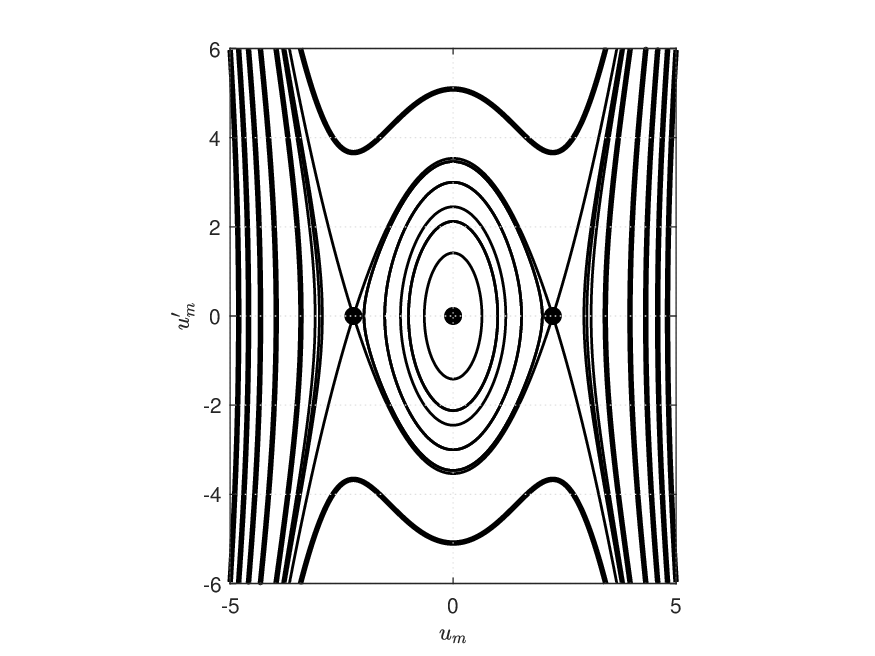}
		\caption{Phase portrait of the first  critical equation ($x\in[0, T_*]$) of \eqref{P1} for $\epsilon=+1$.}
		\label{fig:img1}
	\end{minipage}
	\hfill
	\begin{minipage}{0.48\textwidth}
		\includegraphics[width=\textwidth,height=6cm]{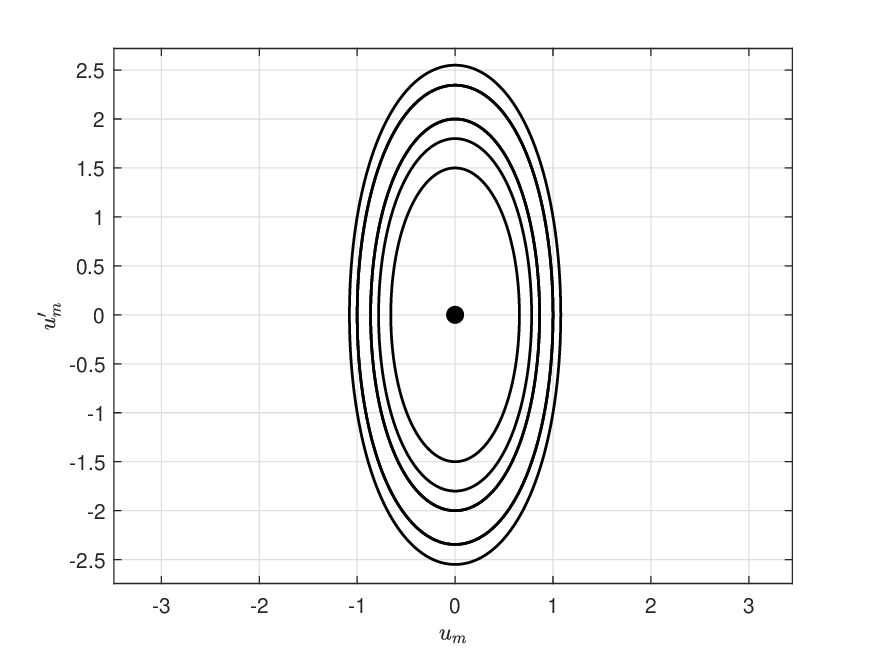}
		\caption{Phase portrait of the second critical equation  ($x\in[ T_*, 1]$) of \eqref{P1} for $\epsilon=+1$.}
		\label{fig:img2}
	\end{minipage}
\end{figure}

 Note that the first integrals of equation \x{P1} are
		\begin{equation}\lb{shouci1}
		\begin{cases}
		(u_m'(x))^2-\frac{\e}{p^*}|u_m(x)|^{2p^*}+\la_m u_m^2(x)=k,\quad x\in [0,T_*],\\
		(u_m'(x))^2+\frac{\e}{p^*}|u_m(x)|^{2p^*}+\la_m u_m^2(x)=\tilde k,\quad  x\in[T_*,1].
		\end{cases}
		\end{equation}
Here $k$ and $\tilde k$ are two (different) constants. It is important to notice from \x{shouci1} that the phase portraits are symmetric with respect to $u_m$ and to $u'_m$. Hence the zeros (nodes) of $u_m(x)$ are evenly distributed on $[0,T_*]$ and on $[T_*, 1]$ respectively.

\subsection{The direct relationship between $T_*$ and $\|\hat q\|_{\Lp}$}

In the preceding section, we have demonstrated that the inverse nodal problem can be transformed into solving critical equations (a class of Schr\"odinger equations). Here, we further elucidate that the sign of $\e$ in the critical equations correlates with $T_*$, which is precisely governed by the following relationship.






\begin{theorem}\label{ep_sign}
 {The value of $\e$ is given as:} 
\begin{equation}\label{ep}
\e=
\begin{cases}
	-1,&\quad {\rm if}\  0<T_*<\frac{i}{m},\\
	+1,& \quad {\rm if}\  \frac{i}{m}< T_*<1.
\end{cases}
\end{equation}	
\end{theorem}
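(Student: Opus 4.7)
The plan is to compare, on each of the two subintervals, the distance between consecutive zeros of $u_m$ (which I will call an \emph{arch length}) against the linear reference value $\pi/\sqrt{\lambda_m}$. By the symmetries $(u_m,u_m')\to(-u_m,u_m')$ and $(u_m,u_m')\to(u_m,-u_m')$ of the first integrals \eqref{shouci1}, the zeros of $u_m$ inside $[0,T_*]$ are equally spaced, and likewise inside $[T_*,1]$. Combined with the boundary/interface zeros $0,T_*,1$ (and the counts of interior zeros $i-1$ and $m-1-i$), this immediately identifies the arch lengths as $T_*/i$ on the left and $(1-T_*)/(m-i)$ on the right.

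Next I would turn each arch length into an explicit quadrature. Setting $A_1:=\max_{[0,T_*]}|u_m|$ and eliminating $u_m'$ from \eqref{shouci1}, a zero-to-zero arch on $[0,T_*]$ takes time
\[
\tau(\lambda_m,A_1,\e):=2\int_0^{A_1}\frac{du}{\sqrt{\lambda_m(A_1^2-u^2)-\frac{\e}{p^*}(A_1^{2p^*}-u^{2p^*})}},
\]
and analogously $(1-T_*)/(m-i)=\tau(\lambda_m,A_2,-\e)$ with $A_2:=\max_{[T_*,1]}|u_m|$. When the nonlinear term is dropped, the substitution $u=A\sin\theta$ evaluates the resulting integral to the amplitude-independent value $\pi/\sqrt{\lambda_m}$, giving my linear reference.

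The crucial monotonicity observation is that $A^{2p^*}-u^{2p^*}>0$ strictly on $(0,A)$, so subtracting the positive quantity $\frac{1}{p^*}(A^{2p^*}-u^{2p^*})$ under the square root (when $\e=+1$) shrinks the denominator and enlarges the integrand, whereas adding it (when $\e=-1$) does the opposite. Hence for every admissible amplitude $A$ one has
\[
\tau(\lambda_m,A,+1)>\pi/\sqrt{\lambda_m}>\tau(\lambda_m,A,-1).
\]
In the case $\e=+1$ this yields $T_*/i>\pi/\sqrt{\lambda_m}>(1-T_*)/(m-i)$; cross-multiplying gives $T_*(m-i)>i(1-T_*)$, i.e., $T_*>i/m$. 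The case $\e=-1$ is completely symmetric and produces $T_*<i/m$. Together these are exactly \eqref{ep}.

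The main technical point I anticipate is verifying that $A_1$ and $A_2$ are admissible amplitudes, meaning the level set of \eqref{shouci1} through the corresponding orbit of $u_m$ is indeed a closed curve cutting out genuine zero-to-zero arches. In the $\e=+1$ case on $[0,T_*]$ this requires $A_1<\lambda_m^{1/(2p^*-2)}$, so that the orbit stays inside the separatrix visible in Fig.~\ref{fig:img1}; this is automatic because $u_m$ is a classical smooth solution that vanishes at both ends of every arch and therefore must trace one of the closed orbits around the origin. The endpoint singularity of the integrand at $u=A$ is of the usual integrable $(A-u)^{-1/2}$ type, so $\tau(\lambda_m,A,\e)$ is finite.
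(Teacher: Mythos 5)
Your proposal is correct and follows essentially the same route as the paper: both arguments use the symmetry of the first integrals \eqref{shouci1} to get evenly spaced zeros with arch lengths $T_*/i$ and $(1-T_*)/(m-i)$, convert each arch length into a time-map quadrature (the paper's $\underline{{\bf T}}_{p}(\alpha_m)$, $\overline{{\bf T}}_{p}(\beta_m)$), and deduce the sign of $\e$ from which side is longer, with the same admissibility restriction on the amplitude in the $\e=+1$ case ($h_m<\lambda_m^{(p-1)/2}$, i.e.\ staying inside the separatrix). Your explicit comparison of each arch against the linear reference $\pi/\sqrt{\lambda_m}$ is just a slightly more spelled-out version of the paper's inequality $\frac{T_*}{2i}<\frac{1}{2m}<\frac{1-T_*}{2(m-i)}$ forcing $\underline{{\bf T}}_{p}(\alpha_m)<\overline{{\bf T}}_{p}(\beta_m)$ and hence $\e<0$.
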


	\begin{proof}
		{
We now consider the first equation of \eqref{shouci1}. Since $\la_m>0$,  the solution $u_m(x)$ is a $P_1$-periodic function.   {The parameters ${h_m}$ and $\alpha_m$ are given by
	 \[
	 {h_m}:=\max_{x\in[0,T_*]}|u_m(x)|,\quad
	 \alpha_m:=\frac{h_m^{2p^*-2}}{\la_m},
	 \]
	 and they can be  determined by each other as follows
	 \begin{equation}\lb{4.3}
	 	\begin{cases}
	 		{h_m}=(\la_m\alpha_m)^{\frac{p-1}{2}},\\
-\e	 	\frac{p-1}{p}{h_m}^{\frac{2p}{p-1}}+\la_m{h_m}^2=k,\\
	 	-\e	\frac{p-1}{p}\alpha_m^p+\alpha_m^{p-1}=\frac{k}{\la_m^p}.
	 	\end{cases}
	 \end{equation}
}	
It should be noted that for different value of $\e$, the range of values for parameters $\alpha_m$ and $h_m$ are different.

If $\e=-1$, it is not difficult to see that $\alpha_m,\ h_m\in\mathbb R^+=(0,+\oo)$.

If $\e=+1$, $\la_m$ must satisfy the condition $\la_m^p>pk$ to ensure that
	\[
	\max_{h\in\mathbb R^+}\left(\la_m h^2-\frac{p-1}{p}h^{\frac{2p}{p-1}}\right) =\left(\la_mh_m^2-\frac{p-1}{p}h_m^{\frac{2p}{p-1}}\right)\bigg|_{h_m={\la_m^{\frac{p-1}{2}}}}=\frac{\la_m^p}{p}>k.
	\]
	Moreover, we deduce that
	$h_m\in(0,\la_m^{\frac{p-1}{2}})$.  It follows form the relationship between $h_m$ and $\al_m$ that  $\al_m\in(0,1)$.	
	 Combining \eqref{4.3} with \eqref{shouci1}, we obtain that
	 \begin{equation}
	 	\frac{du_m}{dx}=\sqrt{\lambda_m}\sqrt{{h_m}^2-u_m^2-\e\frac{p-1}{p\la_m}({h_m}^{2p^*}-u_m^{2p^*})},\quad x\in\left[0,{P_1}/
{4}\right].
	 \end{equation}
It follows form the periodicity of $u_m$ that $u_m(0)=0$ and $u_m(P_1/4)={h_m}$.
 Therefore,
	 \[
	 \frac{P_1}{4}=\int_0^{P_1/4}dx=\int_0^{h_m} \frac{du_m}{u_m'}.
	 \]
By setting $u_m={h_m} t$, we have
	 \[
P_1=\frac{4}{\sqrt{\la_m}}\int_0^1\frac{dt}{\sqrt{1-t^2-\e\frac{\alpha_m}{p^*}(1-t^{2p^*})}}.	
	 \]
{Moreover, we define
    \[
	 \underline{{\bf T}}_{p}(\alpha)\equiv \int_0^1\frac{dt}{\sqrt{1-t^2-\e\frac{\alpha}{p^*}(1-t^{2p^*})}},\ \alpha\in(0,+\infty).
	 \]
}
Then
    \[
    P_1=\frac{4}{\sqrt{\la_m}}\underline{{\bf T}}_{p}(\alpha_m).
    \]
Noting that the gap between the neighboring zeros is $\frac{2\underline{{\bf T}}_{p}(\alpha_m)}{\sqrt{\la_m}}$ and $u_m(T_*)=0$, we obtain that
	\begin{equation}\lb{0+T}
	T_*=\frac{2i\underline{{\bf T}}_{p}(\alpha_m)}{\sqrt{\la_m}}.
    \end{equation}

Next, we consider the second equation of \eqref{shouci1}. Set
    \begin{equation}
	{\widetilde{h}}_m\equiv\max_{x\in[0,T]}|u_m(x)|,\ \beta_m\equiv\frac{\widetilde{h}_m^{2p^*-2}}{\la_m}.
    \end{equation}
Similar to the previous discussion, we have
    \[
    {\widetilde{h}}_m\in \mathbb R^+,\ \beta_m\in \mathbb R^+,\quad {\rm for}\ \e=+1
    \]
and
    \[
    {\widetilde{h}}_m\in(0,\la^{p-1}),\ \beta_m\in (0,1),\quad {\rm for}\ \e=-1.
    \]
According to \eqref{shouci1}, ${\widetilde{h}_m}$ is determined by
    \begin{equation}\lb{4.12}
\la_m{\widetilde{h}_m}^2+\e\frac{1}{p^*}{\widetilde{h}_m}^{2p^*}=\tilde{k}.
    \end{equation}
Since
    \[
\frac{du_m}{dx}=\frac{1}{\sqrt{\la_m}\sqrt{{\widetilde{h}_m}^2-u_m^2+\e\frac{1}{p^*\la_m}({\widetilde{h}_m}^{2p^*}-u_m^{2p^*})}},
    \]
we have
    \[
\frac{P_2}{4}=\int_0^{P_2}dx=\int_0^{\widetilde{h}_m}\frac{1}{u_m'}du_m.
    \]
Set $u_m={\widetilde{h}_m} t$, then we obtain
    \[
P_2=\frac{4}{\sqrt{\la_m}}\int_0^1\frac{1}{\sqrt{1-t^2+\e\frac{\beta_m(1-t^{2p^*})}{p^*}}}dt.
    \]
{Moreover, we define
    \begin{equation}
	\overline{{\bf T}}_{p}(\beta)\equiv \int_0^1\frac{1}{\sqrt{1-t^2+\e\frac{\beta(1-t^{2p^*})}{p^*}}}dt,\quad \beta\in(0,1),
    \end{equation}
    }
which implies that the period of $u_m(x)$ is \begin{equation} P_2=\frac{4\overline{{\bf T}}_{p}(\beta_m)}{\sqrt{\la_m}}.\end{equation}
According to the information of the node, we obtain that
    \begin{equation}\lb{1-T}
1-T_*=\frac{2(m-i)\overline{{\bf T}}_{p}(\beta_m)}{\sqrt{\la_m}}.
    \end{equation}
It follows from  \eqref{0+T}, \eqref{1-T} and Theorem \ref{P} that the periods of the potential are
    \[
P_1=\frac{2T_*}{i},\quad x\in[0,T_*]
    \]
and
    \[
P_2=\frac{2(1-T_*)}{m-i},\quad x\in[T_*,1].
    \]
   For the case  $ 0<T_*<\frac{i}{m}$, note that 
   \[
   \frac{T_*}{2i}<\frac{1}{2m}<\frac{1-T_*}{2(m-i)}.
   \]
   Based on \eqref{0+T} and \eqref{1-T}, we have
   \begin{equation}
   	\begin{cases}
   		\frac{T_*}{2i}=\frac{\underline{{\bf T}}_{p}(\alpha_m)}{\sqrt{\la_m}},\\
   		\frac{1-T_*}{2(m-i)}=\frac{\overline{{\bf T}}_{p}(\beta_m)}{\sqrt{\la_m}}.
   	\end{cases}
   \end{equation}
   Moreover, we obtain that
   \[
   \frac{\underline{{\bf T}}_{p}(\alpha_m)}{\sqrt{\la_m}}<\frac{\overline{{\bf T}}_{p}(\beta_m)}{\sqrt{\la_m}}
   \]
   and
   \[\e<0.\]
  For the case of $\frac{i}{m}< T_*<1$, we  similarly obtain that $\e>0.$
}
This directly yields \eqref{ep}.
    \end{proof}

\begin{remark}
	 For the case
	$T_*=\frac{i}{m}$, it is not different to  concluded that
		$\hat q=0.$ Consequently, from equation \eqref{eq2},  { $\e$ can be $+1$ or $-1$.}
\end{remark}

\begin{remark}
	In the direct nodal problem,  {Chu et al.} \cite{jiedian} derived a critical equation to determine the minimum node. The coefficient of the $u_m^{2p^*}$ term in the critical equation maintains a fixed sign. Different from the direct nodal problems, interestingly, Theorem \ref{ep_sign} here reveals that the sign $\e$ as coefficient of the $u_m^{2p^*}$ in the critical equation of the inverse nodal problem is not fixed and may vary in the different interval associated with the value of $T_*$ (see \eqref{ep}).
\end{remark}

By considering the symmetry, it follows from equality \x{ze1} that
\be \lb{ze2}
2i\int_0^{T_*/2i} u_m^2(x)dx=\int_0^{T_*} u_m^2(x) dx = \int_{T_*}^1  u_m^2(x) dx= 2(m-i) \int_{1-T_*/(2m-2i)}^1 u_m^2(x)dx.
\ee
Moreover, based on variable substitution, we obtain that \[\begin{split}
\int_0^{\frac{T_*}{2i}}u_m^2(x)dx&=\int_0^{h_m}\frac{u_m^2(x)}{u'_m(x)}du_m(x)\\
&=\la_m^{p-\frac{3}{2}}\alpha_m^{p-1}\int_0^1\frac{t^2}{\sqrt{1-t^2-\e\frac{\alpha_m(1-t^{2p^*})}{p^*}}}dt
\end{split}
\]
and
\[
\begin{split}
	\int_{1-\frac{T_*}{2m-2i}}^1u_m^2(x)dx&=	\int_{\widetilde{h}_m}^0\frac{u_m^2(x)}{u'_m(x)}du_m(x)\\
	&=\la_m^{p-\frac{3}{2}}\beta_m^{p-1}\int_0^1\frac{t^2dt}{\sqrt{1-t^2+\e\frac{\beta_m(1-t^{2p^*})}{p^*}}}.
\end{split}
\]
   {
    Let
    \begin{equation}\label{uv}
    \underline{{\bf V}}_{p}(\al)=\alpha^{p-1}\int_0^1\frac{t^2}{\sqrt{1-t^2-\e\frac{\alpha(1-t^{2p^*})}{p^*}}}dt
    \end{equation}
    and
    \begin{equation}\label{ov}
    \overline{{\bf V}}_{p}(\beta)=\beta^{p-1}\int_0^1\frac{t^2}{\sqrt{1-t^2+\e\frac{\beta(1-t^{2p^*})}{p^*}}}dt.
    \end{equation}
    }
According to \eqref{ze2}, we obtain that
\begin{equation}\lb{third eq}
	i\underline{{\bf V}}_{p}(\alpha_m)=(m-i)\overline{{\bf V}}_{p}(\beta_m).
\end{equation}
Since  $\underline{{\bf T}}_{p}(\alpha_m)$  is a monotonic function with respect to $\alpha_m$, 
 we have
\begin{equation}\lb{al}
\al_m=\underline{{\bf T}}_{p}^{-1}\left(\frac{\sqrt{\la_m}T_*}{2i}\right).
\end{equation}
Similarly,
\begin{equation}\lb{beta}
\beta_m=\overline{{\bf T}}_{p}^{-1}\left(\frac{ \sqrt{\la_m}(1-T_*)}{2(m-i)}\right).
\end{equation}
According to \eqref{third eq}, \eqref{al} and \eqref{beta}, $\la_m$ turns to be the solution of the following equation
\begin{equation}\lb{lambda}
	i\underline{{\bf V}}_{p}\left(\underline{{\bf T}}_{p}^{-1}\left(\frac{\sqrt{\la_m}T_*}{2i}\right)\right)=(m-i)\overline{{\bf V}}_{p}\left(\overline{{\bf T}}_{p}^{-1}\left(\frac{ \sqrt{\la_m}(1-T_*)}{2(m-i)}\right)\right).
	\end{equation}

The unknown  variables $\lambda_m(T_*),\ \alpha_m(T_*),\ \beta_m(T_*)$ can theoretically be solved from equations \eqref{al}, \eqref{beta} and \eqref{lambda}.

Note that
 \[\begin{split}
(\|\hat q\|_{p})^p&=\int_0^1|q(x)|^{p}dx\\
&=\int_0^{T_*}|u_m(x)|^{2p^*}dx+\int_{T_*}^1|u_m(x)|^{2p^*}dx\\
&=2i\int_0^{h_m}\frac{|u_m|^{2p^*}du_m}{\sqrt{\lambda_m}\sqrt{h_m^2-u_m^2-\e\frac{h_m^{2p^*}-u_m^{2p^*}}{p^*\lambda_m}}}+2(m-i)\int_0^{\widetilde{h}_m}\frac{|u_m|^{2p^*}du_m}{\sqrt{\lambda_m}\sqrt{\widetilde{h}_m^2-u_m^2+\e\frac{\widetilde{h}_m^{2p^*}-u_m^{2p^*}}{p^*\lambda_m}}}.
\end{split}
\]
By taking $u_m(x)=h_mt$ for $x\in[0,T_*]$ and $u_m(x)=\widetilde{h}_mt$ for $x\in[T_*,1]$, we obtain that
\begin{equation}\lb{qno}
\begin{split}
\|\hat q\|_{p}=\left(\frac{2i(\lambda_m\alpha_m)^{(p-1)p^*}}{\sqrt{\lambda_m}}\underline {\bf U}_{p}(\alpha_m)+\frac{2(m-i)(\lambda_m\beta_m)^{(p-1)p^*}}{\sqrt{\lambda_m}}\overline {\bf U}_{p}(\beta_m)\right)^{\frac{1}{p}},
\end{split}
\end{equation}
{where
\[
\underline {\bf U}_{p}(\alpha)=\int_0^1\frac{t^{2p^*}dt}{\sqrt{1-t^2-\e\alpha\frac{(1-t^{2p^*})}{p^*}}}
\]
and
\[
\overline {\bf U}_{p}(\beta)\int_0^1\frac{t^{2p^*}dt}{\sqrt{1-t^2+\e\beta\frac{(1-t^{2p^*})}{p^*}}}.
\]
}

 Through the functions 
$\underline T_p,\overline T_p,\underline V_p,\overline V_p,\underline U_p,\overline U_p$, we have recovered ${\|\hat q\|_{\Lp}}$ of the potential function by three characteristic features $\alpha_m,\beta_m, \la_m $ through a system of the following nonlinear equations \eqref{sys}. 
Since variables $\alpha_m(T_*),\ \beta_m(T_*), \ \lambda_m(T_*)$ are all related to $T_*$, we have established a quantitative relationship between $\|\hat q\|_{\Lp}$ and $T_*$ by  \eqref{qno}.
 We formulate this result as the following theorem:
 
 \begin{theorem}
	The quantitative relationship between ${\|\hat q\|_{\Lp}}$  and $T_{*}$ is established  { in  \eqref{qno}} through  the three characteristic features $\alpha_m$, $\beta_m$, $\lambda_m$ satisfying the following system of nonlinear equations
	\begin{equation} \label{sys}
		\begin{cases}
			\al_m=\underline{{\bf T}}_{p}^{-1}\left(\frac{\sqrt{\la_m}T_*}{2i}\right),\\
			\beta_m=\overline{{\bf T}}_{p}^{-1}\left(\frac{ \sqrt{\la_m}(1-T_*)}{2(m-i)}\right),\\
			i\underline{{\bf V}}_{p}(\alpha_m)=(m-i)\overline{{\bf V}}_{p}(\beta_m).
		\end{cases}
	\end{equation}
\end{theorem}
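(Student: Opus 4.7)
The proof proposal splits into three parts, corresponding to the three equations in \eqref{sys}, followed by the derivation of the formula for $\|\hat q\|_{\Lp}$.

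First, the first two equations of \eqref{sys} are a direct rewriting of \eqref{0+T} and \eqref{1-T}. To make the inversion meaningful, I would verify that $\underline{{\bf T}}_{p}(\alpha)$ is strictly monotone on its natural domain ($(0,\infty)$ when $\e=-1$, $(0,1)$ when $\e=+1$), and likewise for $\overline{{\bf T}}_{p}(\beta)$. This follows by differentiating under the integral sign: the integrand changes monotonically in $\alpha$ (resp.\ $\beta$) for each fixed $t\in(0,1)$, with a definite sign determined by $\e$. Strict monotonicity then gives well-defined inverses $\underline{{\bf T}}_{p}^{-1}$ and $\overline{{\bf T}}_{p}^{-1}$, and solving \eqref{0+T} and \eqref{1-T} for $\alpha_m$ and $\beta_m$ produces the first two equations of \eqref{sys}.

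Second, the third equation encodes the key orthogonality identity \eqref{ze1}, namely
\begin{equation*}
\int_0^{T_*} u_m^2(x)\,dx \;=\; \int_{T_*}^1 u_m^2(x)\,dx,
\end{equation*}
which reflects the translation invariance of the nodes. On $[0,T_*]$, $u_m$ has exactly $i+1$ zeros (including the endpoints) and is split into $i$ congruent half-periods of length $P_1/2=T_*/i$, each contributing the same $L^2$-mass. Using the first integral \eqref{shouci1} to express $dx = du_m/|u_m'|$ on a quarter-period, followed by the substitution $u_m = h_m t$ and the relation \eqref{4.3} between $h_m$, $\alpha_m$, and $\lambda_m$, I expect to factor out a clean power of $\lambda_m$ and identify the remaining scalar integral over $t\in[0,1]$ as $\underline{{\bf V}}_p(\alpha_m)$ (this will be taken as the definition \eqref{uv}). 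The parallel computation on $[T_*,1]$ with the substitution $u_m = \widetilde h_m t$ and \eqref{4.12} produces $\overline{{\bf V}}_p(\beta_m)$ (definition \eqref{ov}). Inserting these into \eqref{ze1} and canceling the common $\lambda_m$-dependence, the remaining $h_m^2$ and $\widetilde h_m^2$ factors cancel (after absorbing them into the definitions of $\underline{{\bf V}}_p$ and $\overline{{\bf V}}_p$), leaving $i\underline{{\bf V}}_p(\alpha_m) = (m-i)\overline{{\bf V}}_p(\beta_m)$.

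Third, to obtain the closed expression of $\|\hat q\|_{\Lp}$ through $\alpha_m$, $\beta_m$, $\lambda_m$, I use \eqref{eq2} with $q_0\equiv 0$, which gives $|\hat q(x)| = |u_m(x)|^{2p^*-2}$. Since $(2p^*-2)p = 2p^*$, as noted in \eqref{RT1}, one has
\begin{equation*}
\|\hat q\|_{\Lp}^{p} \;=\; \int_0^1 |u_m(x)|^{2p^*}\,dx,
\end{equation*}
which I split at $T_*$ and evaluate by the same quarter-period change of variables as in Step 2, but with integrand $t^{2p^*}$ in place of $t^2$. This produces $\|\hat q\|_{\Lp}$ as an explicit function of the three parameters.

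The main obstacle I anticipate is the careful bookkeeping in Step 2: counting quarter-periods on each subinterval, tracking the sign $\e=\pm 1$ (which is already pinned down by Theorem \ref{ep_sign}), and choosing the normalizations in \eqref{uv}, \eqref{ov} so that the $h_m^2$ and $\widetilde h_m^2$ factors disappear in the final equation. The monotonicity needed in Step 1 and the verification that the image of $\underline{{\bf T}}_p$ and $\overline{{\bf T}}_p$ covers the prescribed target values $\sqrt{\lambda_m}T_*/(2i)$ and $\sqrt{\lambda_m}(1-T_*)/(2(m-i))$ are straightforward but must be checked case-by-case in $\e$.
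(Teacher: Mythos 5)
Your proposal is correct and follows essentially the same route as the paper: invert the period relations \eqref{0+T} and \eqref{1-T} via monotonicity of $\underline{{\bf T}}_{p}$, $\overline{{\bf T}}_{p}$ to get the first two equations, derive the third from the identity \eqref{ze1} by splitting $[0,T_*]$ and $[T_*,1]$ into congruent quarter-periods and substituting $u_m=h_mt$ (resp.\ $u_m=\widetilde h_m t$) with the factors $\la_m^{p-3/2}$ canceling and $\alpha_m^{p-1}$, $\beta_m^{p-1}$ absorbed into \eqref{uv}--\eqref{ov}, and finally compute $\|\hat q\|_{\Lp}^p=\int_0^1|u_m|^{2p^*}dx$ by the same change of variables. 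Your added care about verifying the monotonicity and surjectivity of $\underline{{\bf T}}_{p}$, $\overline{{\bf T}}_{p}$ (which the paper only asserts) is a harmless refinement, not a different method.
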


\begin{remark}
		We reduce the inverse nodal problem {$(INP)$}
 to a solvable system of three nonlinear equations \eqref{sys}. Although the explicit expression of the solutions to \eqref{sys} are not obtained, the problem has been simplified {in a significant way}.
	Once the solution  $\lambda_m=\lambda_*$ to the equation \eqref{lambda} is obtained, the expression for   $\|\hat q\|_{\Lp}$ is given by
	\[
	\begin{split}
	\|\hat q\|_{\Lp}=&\left\{\frac{2i\left(\lambda_*\underline{{\bf T}}_{p}^{-1}\left(\frac{\sqrt{\la_*}T_*}{2i}\right)\right)^{(p-1)p^*}}{\sqrt{\lambda_*}}\underline {\bf U}_{p}\left(\underline{{\bf T}}_{p}^{-1}\left(\frac{\sqrt{\la_*}T_*}{2i}\right)\right)\right.\\
	&\left.+\frac{2(m-i)\left(\lambda_*\overline{{\bf T}}_{p}^{-1}\left(\frac{ \sqrt{\la_*}(1-T_*)}{2(m-i)}\right)\right)^{(p-1)p^*}}{\sqrt{\lambda_*}}\overline {\bf U}_{p}\left(\overline{{\bf T}}_{p}^{-1}\left(\frac{ \sqrt{\la_*}(1-T_*)}{2(m-i)}\right)\right)\right\}^{\frac{1}{p}}.
	\end{split}
	\]
	
\end{remark}

\subsection{The uniqueness of the potential $\hat q$}
  {As an  important problem, is the solution to  {\bf problem $(INP)$} unique?  We  prove} that for
$p>\frac{3}{2}$, the uniqueness of $\hat q$ is guaranteed. We  first prove the uniqueness of $\lambda_m$.

{
\begin{theorem} \label{3.2}
	For $p>\frac{3}{2}$, the solution $\la_m$ of equation \eqref{lambda} is unique. Consequently, the minimal potential $\hat q$ which optimizes {\bf problem $(INP)$} is also unique.
	\end{theorem}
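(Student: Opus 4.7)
The strategy is to reduce \eqref{lambda} to a scalar monotone equation in $\lambda_m$, so that uniqueness of $\lambda_m$ is forced and then propagates through the system \eqref{sys} to the potential. Concretely, I would introduce
\[
\Phi(\lambda) := i\,\underline{\bf V}_p\!\left(\underline{\bf T}_p^{-1}\!\left(\tfrac{\sqrt{\lambda}\,T_*}{2i}\right)\right) - (m-i)\,\overline{\bf V}_p\!\left(\overline{\bf T}_p^{-1}\!\left(\tfrac{\sqrt{\lambda}(1-T_*)}{2(m-i)}\right)\right),
\]
and aim to show that $\Phi$ is strictly monotonic on the admissible range of $\lambda$. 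Since $\underline{\bf T}_p$ and $\overline{\bf T}_p$ are already known to be monotonic (as used in Theorem~\ref{ep_sign}), the maps $\lambda\mapsto \alpha(\lambda)$ and $\lambda\mapsto \beta(\lambda)$ determined by \eqref{al} and \eqref{beta} are smooth with an explicit sign for their derivatives. By the chain rule,
\[
\Phi'(\lambda) = i\,\underline{\bf V}_p'(\alpha)\,\alpha'(\lambda) - (m-i)\,\overline{\bf V}_p'(\beta)\,\beta'(\lambda),
\]
which can be evaluated by differentiation under the integral sign in \eqref{uv} and \eqref{ov}.

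The core analytic point is to show that the two terms in $\Phi'(\lambda)$ combine with a definite sign. I would split according to the two cases $\epsilon=+1$ and $\epsilon=-1$ isolated in Theorem~\ref{ep_sign}, since the admissible ranges of $\alpha$ and $\beta$ are interchanged between them. In each case the monotonicity reduces to a weighted integral inequality whose kernel involves the denominator $\sqrt{1-t^2\mp\epsilon\,\gamma(1-t^{2p^*})/p^*}$, and the hypothesis $p>3/2$, equivalently $p^*<3$, is what controls the nonlinear power $2p^*$ against the quadratic term near the turning point $t=1$. I expect that an integration-by-parts identity, linking $\underline{\bf V}_p$ with $\underline{\bf T}_p$ through the first integral \eqref{shouci1}, will reduce the positivity check to an explicit comparison in which the threshold $p=3/2$ emerges naturally as the boundary between sign-definite and sign-indefinite behavior.

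Once uniqueness of $\lambda_m$ is established, \eqref{al} and \eqref{beta} immediately yield unique $\alpha_m$ and $\beta_m$, and then \eqref{4.3} together with \eqref{4.12} produces unique amplitudes $h_m$ and $\widetilde{h}_m$. With $\lambda_m$, $h_m$, $\widetilde{h}_m$ fixed, the piecewise ODE \eqref{P1} under the boundary data \eqref{bc1} and the prescribed nodal counts on $(0,T_*)$ and $(T_*,1)$ determines $u_m(x)$ up to a global sign by standard uniqueness for the initial-value problem on each arc between consecutive zeros. Since \eqref{eq2} expresses $\hat q - q_0$ in terms of $|u_m|^{2p^*-2}$, this residual sign ambiguity is invisible to $\hat q$, and uniqueness of $\hat q$ follows. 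The principal obstacle is the sign analysis of $\Phi'(\lambda)$: the two pieces of $\Phi$ react to $\lambda$ in a priori competing ways, and extracting their net definite sign is exactly where the subcriticality $p>3/2$ must be used in a nontrivial manner rather than through a coarse estimate.
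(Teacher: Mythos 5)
Your skeleton is the same as the paper's: define the scalar function $\Phi$ (the paper's $G$), prove strict monotonicity in $\lambda$, and then propagate uniqueness of $\lambda_m$ through \eqref{al}, \eqref{beta}, \eqref{4.3}, \eqref{4.12}, the piecewise equation \eqref{P1} with \eqref{bc1}, and finally \eqref{eq2} to get uniqueness of $\hat q$ (your handling of the sign ambiguity of $u_m$ is in fact more explicit than the paper's). The gap is that the one step carrying the whole content of the theorem --- the step where $p>3/2$ enters --- is exactly the step you leave as an expectation. Moreover, your diagnosis of where the difficulty sits is off: the two terms of $\Phi'(\lambda)$ do not compete. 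In the case $0<T_*<i/m$ (so $\epsilon=-1$), $\underline{{\bf T}}_{p}$ is strictly decreasing and $\overline{{\bf T}}_{p}$ strictly increasing, hence $\alpha'(\lambda)<0$ and $\beta'(\lambda)>0$; since $\overline{{\bf V}}_{p}$ is increasing in $\beta$ with no restriction on $p$, the second term of $\Phi'$ is automatically negative, and the only issue is the sign of $\tfrac{d\underline{{\bf V}}_{p}}{d\alpha}$. There is no need for (and no evident route through) an integration-by-parts identity linking $\underline{{\bf V}}_{p}$ to $\underline{{\bf T}}_{p}$ via \eqref{shouci1}, nor is there any delicate turning-point analysis near $t=1$; as written, your proposal does not establish the required sign.

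The paper settles this by a purely algebraic rewriting. With $D(\alpha,t)=1-t^2+\frac{\alpha}{p^*}(1-t^{2p^*})$, differentiation under the integral in \eqref{uv} gives
\begin{equation*}
\frac{d\underline{{\bf V}}_{p}}{d\alpha}
=\alpha^{p-2}\left[(p-1)\int_0^1\frac{t^2}{\sqrt{D(\alpha,t)}}\,dt-\frac{\alpha}{2p^*}\int_0^1\frac{t^2(1-t^{2p^*})}{D(\alpha,t)^{3/2}}\,dt\right],
\end{equation*}
and putting both integrands over the common denominator $D^{3/2}$ (i.e.\ writing $t^2/\sqrt{D}=t^2D/D^{3/2}$) turns the bracket into
\begin{equation*}
(p-1)\int_0^1\frac{t^2(1-t^2)}{D(\alpha,t)^{3/2}}\,dt+\frac{(2p-3)\,\alpha}{2p^*}\int_0^1\frac{t^2(1-t^{2p^*})}{D(\alpha,t)^{3/2}}\,dt,
\end{equation*}
which is positive precisely because $2p-3>0$. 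So the threshold $p>3/2$ enters through a coarse cancellation of coefficients, not through the subtle comparison you anticipate; until you supply an argument of this kind (or a working substitute for your proposed identity), the monotonicity of $\Phi$, and hence the theorem, is not proved. The symmetric case $T_*\in(i/m,1)$ is then handled analogously, as you indicate.
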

}

	\begin{proof}		
	Here, we only prove the case where
{$T_*\in(0,\frac{i}{m})$.
The proof for the case where
$T_*\in(\frac{i}{m},1)$ is analogous.}

		Let
		\[
		G(\lambda_m)=
		i\underline{{\bf V}}_{p}\left(\underline{{\bf T}}_{p}^{-1}\left(\frac{\sqrt{\la_m}T_*}{2i}\right)\right)-(m-i)\overline{{\bf V}}_{p}\left(\overline{{\bf T}}_{p}^{-1}\left(\frac{ \sqrt{\la_m}(1-T_*)}{2(m-i)}\right)\right),\quad {\rm for}\ \lambda_m\in\mathbb R^+.\]
Define
		\[\begin{split}
		D(\alpha_m, t) &= 1 - t^2 + \frac{\alpha_m}{p^*}(1 - t^{2p^*}),
		\\
		\overline D(\beta_m, t) &= 1 - t^2 - \frac{\beta_m}{p^*}(1 - t^{2p^*}).
    \end{split}
		\]
		Note that
		\[
		G'(\lambda_m) = i \frac{d}{d\lambda_m} \underline{{\bf V}}_{p}(\alpha_m) - (m - i) \frac{d}{d\lambda_m} \overline{{\bf V}}_{p}(\beta_m),
		\]
		{where
\[
		\begin{split}
			\frac{d}{d\lambda_m} \underline{{\bf V}}_{p}(\alpha_m)& = \underbrace{\left[(p-1)\alpha_m^{p-2}\int_0^1 \frac{t^2}{\sqrt{D(\alpha_m, t)}}\, dt + \alpha_m^{p-1}\frac{d}{d\alpha_m}\z(\int_0^1 \frac{t^2}{\sqrt{D(\alpha_m, t)}}\, dt\y)\right]}_{\frac{d \underline{{\bf V}}_{p}}{d \alpha_m}} \\
& \cdot \underbrace{\frac{1}{\underline{{\bf T}}_{p}'(\alpha_m)}}_{<0} \cdot \underbrace{\frac{T_*}{4i\sqrt{\lambda_m}}}_{>0}
		\end{split}
\]
		and
\[
		\begin{split}
			\frac{d}{d\lambda_m} \overline{{\bf V}}_{p}(\beta_m)
			&= \underbrace{\left[(p-1)\beta_m^{p-2}\int_0^1 \frac{t^2}{\sqrt{\overline D(\beta_m, t)}}\, dt + \beta_m^{p-1}\frac{d}{d\beta_m}\z(\int_0^1 \frac{t^2}{\sqrt{\overline D(\beta_m, t)}}\, dt\y)\right]}_{>0} \\
& \cdot \underbrace{\frac{1}{\overline{{\bf T}}_{p}'(\beta_m)}}_{>0} \cdot \underbrace{\frac{1-T_*}{4(m-i)\sqrt{\lambda_m}}}_{>0} > 0.
		\end{split}
\]
		}

		To obtain the result, it is suffices to study  $\frac{d \underline{{\bf V}}_{p}}{d \alpha_m}$.
Using the product rule, we obtain that
\[
\frac{d\underline{{\bf V}}_{p}}{d\alpha_m} = (p-1)\alpha_m^{p-2} \int_0^1 \frac{t^2}{\sqrt{D(\alpha_m, t)}}\, dt - \alpha_m^{p-1} \frac{d}{d\alpha_m} \left( \int_0^1 \frac{t^2}{\sqrt{D(\alpha_m, t)}}\, dt \right).
\]
Thus
\[
\frac{d\underline{{\bf V}}_{p}}{d\alpha_m} = (p-1)\alpha_m^{p-2}\int_0^1 \frac{t^2}{\sqrt{D(\alpha_m, t)}}\, dt - \frac{\alpha_m^{p-1}}{2p^*} \int_0^1 \frac{t^2(1 - t^{2p^*})}{D(\alpha_m, t)^{3/2}}\, dt.
\]
We need to show
\[
(p-1)\int_0^1 \frac{t^2}{\sqrt{D(\alpha_m, t)}}\, dt - \frac{\alpha_m}{2p^*} \int_0^1 \frac{t^2(1 - t^{2p^*})}{D(\alpha_m, t)^{3/2}}\, dt > 0.
\]
It follows that
\[
\begin{split}
	&(p-1)\int_0^1 \frac{t^2}{\sqrt{D(\alpha_m, t)}}\, dt - \frac{\alpha_m}{2p^*} \int_0^1 \frac{t^2(1 - t^{2p^*})}{D(\alpha_m, t)^{3/2}}\\=&(p-1)\int_0^1\frac{t^2(1-t^2+\frac{\al_m}{p^*}(1-t^{2p^*}))dt}{D^{\frac{3}{2}}}-\frac{\al_m}{2p^*}\int_0^1\frac{t^2(1-t^{2p^*})}{D^{\frac{3}{2}}}dt\\
	=&(p-1)\int_0^1\frac{t^2(1-t^2)}{D^{\frac{3}{2}}}dt+\frac{2p-3}{2p^*}\al_m\int_0^1\frac{t^2(1-t^{2p^*})}{D^{\frac{3}{2}}}dt>0.
\end{split}
\]	
Then we obtain that $	G'(\lambda_m)<0$ and the solution of \eqref{lambda} is unique.

 {
The uniqueness of the eigenvalue $\lambda_m$ ensures that the solution $u_m(x)$ to the Schr\"odinger equation \eqref{P1} is uniquely determined. Furthermore, the one-to-one correspondence in \eqref{eq2} between $\hat q$ and $u_m$ guarantees the uniqueness of the potential $\hat q$.
}
\end{proof}


{
	\subsection{A concentrate example  ($p=2$)}\lb{sec5}
}

{For $p=2$, $m=2$ and $i=1$, we give more detailed results. Here the elliptic functions will play an important role.}

For $0<T_*<1/2$, we represent the six functions above through elliptic integration
	\[
    \begin{split}
	\underline{{\bf T}}_2(\alpha)&=\sqrt{\frac{2}{2+\alpha}}\mathbb E_1\left(-\frac{\alpha}{2+\alpha}\right),\\
    \underline {\bf U}_2(\alpha)&=\frac{4+2\alpha}{3}\left((4+\alpha)\mathbb E_1-4\mathbb E_2\left(-\frac{\alpha}{2+\alpha}\right)\right),\\
	\underline{{\bf V}}_2(\alpha)&=\sqrt{4+2\alpha}\left(-\mathbb E_1\left(-\frac{\alpha}{2+\alpha}\right)+\mathbb E_2\left(-\frac{\alpha}{2+\alpha}\right)\right),
	\\
	\overline{{\bf T}}_2(\beta)&=\sqrt{\frac{2}{2-\beta}}\mathbb E_1\left(\frac{\beta}{2-\beta}\right),\\
    \overline {\bf U}_2(\beta)& =\frac{\sqrt{4-2\beta}}{3}\left((4-\beta)\mathbb E_1\left(\frac{\beta}{2-\beta}\right)-4\mathbb E_2\left(\frac{\beta}{2-\beta}\right)\right),\\
    \overline{{\bf V}}_2(\beta)&=\sqrt{4-2\beta}\z(\mathbb E_1\left(\frac{\beta}{2-\beta}\right)-\mathbb E_2\left(\frac{\beta}{2-\beta}\right)\y),
\end{split}
\]
where $\alpha\in(0,+\infty)$, $\beta\in(0,1)$ and
\[
\begin{split}
 \mathbb E_1(s)&=\int_0^1\frac{1}{\sqrt{(1-t^2)(1-st^2)}}dt,
\\
\mathbb E_2(s)&=\int_0^1\frac{\sqrt{1-st^2}}{\sqrt{1-t^2}}dt.
\end{split}
\]
Then \eqref{0+T}, \eqref{1-T} and \eqref{third eq} become
\[
\begin{split}
T_*&=\frac{2}{\sqrt{\la_2}}\sqrt{\frac{2}{2+\al_2}}\mathbb E_1\left(-\frac{\al_2}{2+\al_2}\right),
\\
1-T_*&=\frac{2}{\sqrt{\la_2}}\sqrt{\frac{2}{2-\beta_2}}\mathbb E_1\left(\frac{\beta_2}{2-\beta_2}\right),
\end{split}
\]
and
\[
\begin{split}
&\sqrt{4+2\al_2}\left(-\mathbb E_1\left(-\frac{\al_2}{2+\al_2}\right)+\mathbb E_2\left(-\frac{\al_2}{2+\al_2}\right)\right)\\
=&\sqrt{4-2\beta_2}\left( \mathbb E_1\left(\frac{\beta_2}{2-\beta_2}\right)-\mathbb E_2\left(\frac{\beta_2}{2-\beta_2}\right)\right).
\end{split}
\]

For $\frac{1}{2}<T_*<1$, it follows that
\[
\begin{split}
\underline{{\bf T}}_2(\alpha)&=\sqrt{\frac{2}{2-\alpha}}\mathbb E_1\left(\frac{\alpha}{2-\alpha}\right),\\
\underline {\bf U}_2(\alpha)&=\frac{4-2\alpha}{3}\left((4-\alpha)\mathbb E_1-4\mathbb E_2\left(\frac{\alpha}{2-\alpha}\right)\right),
\\
\underline{{\bf V}}_2(\alpha)&=\sqrt{4-2\alpha}\left(-\mathbb E_1\left(\frac{\alpha}{2-\alpha}\right)+\mathbb E_2\left(\frac{\alpha}{2-\alpha}\right)\right),
\\
\overline{{\bf T}}_2(\beta)&=\sqrt{\frac{2}{2+\beta}}\mathbb E_1\left(-\frac{\beta}{2+\beta}\right),
\\
\overline {\bf U}_2(\beta)&=\frac{\sqrt{4+2\beta}}{3}\left((4+\beta)\mathbb E_1\left(-\frac{\beta}{2+\beta}\right)-4\mathbb E_2\left(-\frac{\beta}{2+\beta}\right)\right),
\\
\overline{{\bf V}}_2(\beta)&=\sqrt{4+2\beta}\z(\mathbb E_1\left(-\frac{\beta}{2+\beta}\right)-\mathbb E_2\left(-\frac{\beta}{2+\beta}\right)\y),
\end{split}
\]
where $\alpha\in(0,1)$, $\beta\in (0,+ \infty)$.\\
Then \eqref{0+T}, \eqref{1-T} and \eqref{third eq} become
\[ \begin{split}
T_*&=\frac{2}{\sqrt{\la_2}}\sqrt{\frac{2}{2-\al_2}}\mathbb E_1\left(\frac{\al_2}{2-\al_2}\right),
\\
1-T_*& =\frac{2}{\sqrt{\la_2}}\sqrt{\frac{2}{2+\beta_2}}\mathbb E_1\left(-\frac{\beta_2}{2+\beta_2}\right),
\end{split}
\]
and
\[
\begin{split}
\sqrt{4-2\al_2}\left(-\mathbb E_1\left(\frac{\al_2}{2-\al_2}\right)+\mathbb E_2\left(\frac{\al_2}{2-\al_2}\right)\right)
=\sqrt{4+2\beta_2} \left( \mathbb E_1\left(-\frac{\beta_2}{2+\beta_2}\right)-\mathbb E_2\left(-\frac{\beta_2}{2+\beta_2}\right)\right).
\end{split}
\]
For $0 < T_* < 1/2$,
note that
\[\underline{{\bf V}}_2(\alpha_2)=\sqrt{4+2\alpha_2}\left(-\mathbb E_1\left(-\frac{\alpha_2}{2+\alpha_2}\right)+\mathbb E_2\left(-\frac{\alpha_2}{2+\alpha_2}\right)\right).
\]
The uniqueness of solutions can be more rigorously demonstrated through elliptic function analysis. In fact,
Let 	\[
G_2(\lambda_2)=
\underline{{\bf V}}_{p}\left(\underline{{\bf T}}_{p}^{-1}\left(\frac{\sqrt{\la_2}T_*}{2}\right)\right)-\overline{{\bf V}}_{p}\left(\overline{{\bf T}}_{p}^{-1}\left(\frac{ \sqrt{\la_2}(1-T_*)}{2}\right)\right),\quad {\rm for}\ \lambda_2\in\mathbb R^+.\]
For $0 < T_* < 1/2$, 
note that
\[\underline{{\bf V}}_2(\alpha_2)=\sqrt{4+2\alpha_2}\left(-\mathbb E_1\left(-\frac{\alpha_2}{2+\alpha_2}\right)+\mathbb E_2\left(-\frac{\alpha_2}{2+\alpha_2}\right)\right).
\]
It follows that $\underline{{\bf V}}_2(\alpha_2)$ increases when $\alpha_2$ increases.
According to the proof of Theorem \ref{3.2}, we obtain that
 	 $G_2(\lambda_2)$ decreases when $\lambda_2 \in \mathbb{R}^+$ increases. It immediately follows that $\lambda_2$ is unique.
Similarly, for the case   $\frac{1}{2} < T_* < 1$, we can analogously  {prove} the uniqueness of $\lambda_2$,  from which it follows that
$\hat q$ is also unique.

\begin{figure}[ht]
\centering
\hspace{1.5cm}\includegraphics[width=8.5cm, height=6cm]{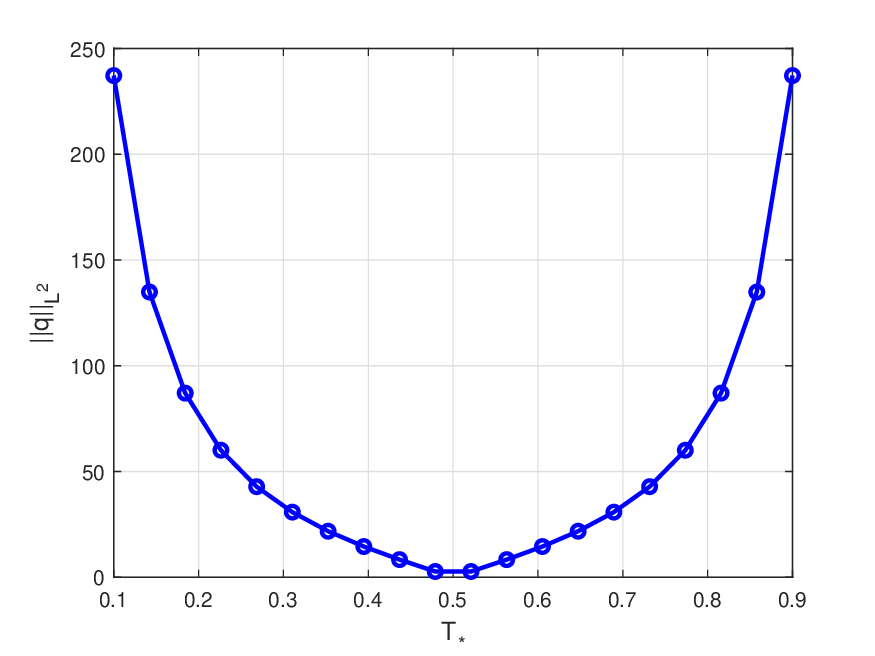}
\caption{The relationship between  $T_*$ and $\|\hat q\|_{\mathcal{L}^2}$.}
\lb{fig3}
\end{figure}

\begin{remark}
With the zero target, we reduce the inverse nodal problem of Sturm-Liouville operators to solving  equations \eqref{0+T}, \eqref{1-T} and \eqref{third eq}. Thus, the infinite-dimensional inverse problem simplifies to solving three nonlinear equations for three parameters. While obtaining explicit analytical solutions to these equations remains challenging, we can numerically investigate the  relationship between  $T_*$ and $\|\hat q\|_{\Lp}$. See Fig.  \ref{fig3} for the case $i=1$, $m=2$, $p=2$.
\end{remark}

\section{Further discussion and open problem}

This paper develops a method for addressing a novel and application-oriented inverse nodal problem in Sturm-Liouville operator. Unlike McLaughlin \cite{Mclaughlin},  Yang \cite{yang,yang2},  Guo and Wei \cite{guo-wei},  Wang and Yurko\cite{w-y}, Yang \cite{YANGCF}, who rely on {\em a dense subset} of the nodal set, we propose an optimization framework that recovers the potential $\hat{q}$ from {\em merely a finite set} of nodal data, achieving optimal approximation to the target potential $q_0$. The proposed approach capitalizes on the complete continuity of nodes with respect to potentials, thereby establishing the existence of solutions to the inverse problem. Using the Lagrange multiplier method, we reformulate the inverse nodal optimization problem as a solvability problem of a class of  Schr\"odinger equations. This reformulation enables systematic investigation for the inverse nodal problem, consequently, with constant targets, the infinite-dimensional inverse problem simplifies to solving three nonlinear equations for three parameters. Furthermore, the periodicity of the reconstructed potential $\hat{q}$ is shown to be completely determined by the periodicity of solutions to the corresponding Schr\"odinger equations. This approach reveals three novel characteristics that establish an explicit relationship between $\|\hat{q}\|_{L^p}$ and the crucial parameter $T_*$.

 {Given potentials in the Sturm-Liouville operators, Chu et al. \cite{jiedian} studied the direct nodal problem and
obtained the optimal characterizations of locations for all nodes. By analyzing the node minimization problem, they established the critical equations, which yield two equivalent formulations for characterizing the minimal nodes, expressed as nonlinear systems involving of $4$-dimensional or $2$-dimensional parameters. While they asserted the uniqueness of these nodes, no rigorous mathematical proof was provided for the uniqueness of solutions to the nonlinear equations in \cite{jiedian}. Crucially, such solutions inherently contain significant physical information, particularly the parameter $\lambda_m$ governing the system's intrinsic frequency-a key factor in quantum vibrational modes and energy structures. In this paper, we establish a sufficient condition ensuring the uniqueness of solution for the nonlinear system in the inverse problem, thereby advancing and refining the original framework of direct nodal problem in Chu et al. \cite{jiedian}.}

 Of importance, we proved the  uniqueness of the potential $\hat q$ under the assumption $p>\frac{3}{2}$. For $1<p\leq \frac{3}{2}$, we guess that the uniqueness still holds. However, the proof in this case remains open, because we cannot find an efficient method to prove it. We leave it as an open problem.

 Finally, in this paper, for {\em one} given node, we present a framework to  study the inverse nodal  problems in  Sturm-Liouville operators. It is believed that it could be extended to the cases of given {\em two} or {\em finite } nodes. However, there will be a challenge of identifying the uniqueness of the potential $\hat q$.

\section*{ Conflict of Interest}
The authors declare that they have no conflict of interest.

\section*{Data Availability Statement}

No data was used in the research in this manuscript.

\section*{Acknowledgement}
\hskip\parindent
\small
We
declare that the authors are ranked in alphabetic order of their names and all of them have the same
contributions to this paper.


\begin{thebibliography}{999}

	\bibitem{book1}
	 {C. Bennewitz, M. Brown, R. Weikard,  Spectral and Scattering Theory for Ordinary Differential
Equations, I: Sturm–Liouville Equations. Universitext, Springer, Berlin (2020).
}
	
	\bibitem{book}
	G. Birkhoff, G. Rota, Ordinary Differential Equations, 2nd ed., Wiley, New York (1969).
	
	\bibitem{jiedian}
	J. Chu, G. Meng, F. Wang,  M. Zhang, Optimization problems on nodes of Sturm-Liouville
	operators with $L^p$ potentials, Math. Ann., 390 (2024), 1401-1417.
	
{
	\bibitem{jiedian1}
	J. Chu, G. Meng, F. Wang,  M. Zhang, Complete continuity and Fr\'echet derivatives of nodes in potentials for
one-dimensional $p$-Laplacian, J.
	Differential Equations,  416 (2025), 1960-1976.
}

\bibitem{guo-wei}
Y. Guo, G. Wei,  The sharp conditions of the uniqueness for inverse nodal problems, J.
Differential Equations,  266 (2019), 4432-4449.



	\bibitem{G-Z}
	S. Guo, M. Zhang, On the dependence of nodes of Sturm-Liouville problems on potentials, Mediterr. J. Math., 19:168 (2022), 11 pp.

	
	\bibitem{G-Z2}
	S. Guo, M. Zhang, A variational approach to the optimal locations of the nodes of the second Dirichlet
	eigenfunctions, Math. Methods Appl. Sci.,  46 (2023), 11983-12006.
	
	
	
	\bibitem{H-M}
	O.  Hald, J. McLaughlin, Inverse nodal problems: finding the potential from nodal lines, Mem.
	Am. Math. Soc., 119  (1996), 148 pp.
	
	\bibitem{V-I2019}
	Y. Il'yasov, N. Valeev, On nonlinear boundary value problem corresponding to
	$N$-dimensional inverse spectral problem, J. Differential Equations, 266 (2019), 4533-4543.
	
	\bibitem{book3}
	 {
	B. Levitan, I. Sargsjan, Sturm-Liouville and Dirac. Operators Translated from the Russian.
	Kluwer Academic Publishers Group, Dordrecht (1991).}
	
	\bibitem{Mclaughlin}
	J. McLaughlin, Inverse spectral theory using nodal points as data---a
	uniqueness result, J. Differential Equations,  73 (1988), 354-362.
	
	\bibitem{P-S}
	 {J. Pinasco, C. Scarola, A nodal inverse problem for a quasi-linear ordinary differential equation in
	the half-line, J. Differential Equations,  261 (2016), 1000–1016.} 
	
	\bibitem{P-T}	J. P\"oschel, E. Trubowitz, Inverse spectral theory on the interval, Commun. Pure Appl. Math., 39 (1986), 609–639.
	
	\bibitem{s-s-v}
	V.  Sadovnichii, Ya.  Sultanaev, N.  Valeev, Optimization spectral problem for the Sturm-Liouville operator
	in a vector function space,  Dokl. Math., 108 (2023), 406-410.
	
	
	\bibitem{V-I3}
	V.  Sadovnichii, Ya.  Sultanaev,  N. Valeev, Optimization inverse spectral problem
	for the one-dimensional Schr\"odinger operator
	on the entire real line, Differ. Uravn.,   60 (2024),  492-499.
	
	\bibitem{V-I}
	N. Valeev, Y. Il'yasov, On an inverse optimization spectral problem and a corresponding nonlinear
	boundary-value problem, Math. Notes,  104 (2018), 601-605.


\bibitem{w-y}
 {
 Y. Wang, V. Yurko, On the inverse nodal problems for discontinuous Sturm–Liouville operators, J. Differential
Equations, 260  (2016),  4086–4109.}

\bibitem{YANGCF}
 { C.-F. Yang, Inverse nodal problems of discontinuous Sturm–Liouville operator, J. Differential Equations,  254,
	 (2013), 1992–2014.}

    \bibitem{yang}
    X.-F. Yang, A solution of the inverse nodal problem, Inverse Probl.,
    13 (1997), 203-213.
\bibitem{yang2}
 X.-F. Yang, A new inverse nodal problem, J. Differential Equations, 169  (2001), 633–653.
 
\bibitem{book4}
 {
A. Zettl, Sturm–Liouville Theory, Mathematical Surveys and Monographs. American Mathematical
Society, Providence (2005).}

    \bibitem{zhang}
    M. Zhang,  Continuity in weak topology: higher order linear systems of ODE, Sci. China
    Ser. A,  51 (2008), 1036-1058.

	
\end{thebibliography}
\end{document}